\title{Uniform Birkhoff}
\keywords{Pseudovariety, equational class, clone homomorphism, uniformity, pointwise convergence topology}
\subjclass[2010]{03C05; 08A30; 08C05}
\author
{Mai Gehrke}
\address{IRIF, University Paris-Diderot and CNRS, France.}
\email{mgehrke@liafa.univ-paris-diderot.fr}
 \urladdr{http://www.irif.univ-paris-diderot.fr/~mgehrke/}
\thanks{The research of Mai Gehrke has received funding from the European Research Council (ERC) under the European Union's Horizon 2020 research and innovation programme (grant agreement No 670624). 
}
\author
{Michael Pinsker}
	\address{Department of Algebra, MFF UK, Sokolovska 83, 186 00 Praha 8, Czech Republic}    
	\email{marula@gmx.at}
    \urladdr{http://dmg.tuwien.ac.at/pinsker/}
\thanks{The research of Michael Pinsker has been funded through project P27600 of the  Austrian Science Fund (FWF)}
\theoremstyle{plain}
    \newtheorem{thm}{Theorem}[section]
    \newtheorem{prop}[thm]{Proposition}
    \newtheorem{cor}[thm]{Corollary}
\theoremstyle{definition}
    \newtheorem{defn}[thm]{Definition}
\theoremstyle{remark}
\DeclareMathOperator{\I}{I}
\DeclareMathOperator{\Clo}{Clo}
\newcommand{\ignore}[1]{}
\newcommand{\To}{\rightarrow}
\newcommand{\aA}{{\mathfrak A}}
\newcommand{\aB}{{\mathfrak B}}
\newcommand{\aC}{{\mathfrak C}}
\newcommand{\aF}{\ensuremath{\mathfrak{F}}}
\newcommand{\aT}{{\mathfrak T}}
\newcommand{\F}{{\mathscr F}}
\newcommand{\sT}{{\mathscr T}}
\newcommand{\cB}{\ensuremath{\mathcal{B}}}
\newcommand{\cC}{\ensuremath{\mathcal{C}}}
\newcommand{\cK}{\ensuremath{\mathcal{K}}}
\newcommand{\cU}{\ensuremath{\mathcal{U}}}
\newcommand{\cP}{\ensuremath{\mathcal{P}}}
\newcommand{\cV}{\ensuremath{\mathcal{V}}}
\newcommand{\cVf}{\ensuremath{\mathcal{V}_{\it {f}{g}}}}
\DeclareMathOperator{\con}{Con}
\DeclareMathOperator{\HH}{H}
\DeclareMathOperator{\Pf}{P_{\it fin}}
\DeclareMathOperator{\SPf}{SP^{\it fg}_{\it fin}}
\DeclareMathOperator{\Sf}{S_{\it fg}}
\newcommand{\ot}{\ensuremath{\overline{t}}}
\newcommand{\oa}{\ensuremath{\overline{a}}}
\begin{document}
\maketitle
\begin{abstract}
We show that pseudovarieties of finitely generated algebras, i.e., classes $\cC$ of finitely generated algebras closed under finite products, homomorphic images, and subalgebras, can be described via a uniform structure $\cU^\cC$ on the free algebra for $\cC$: the members of $\cC$ then are precisely those finitely generated algebras $\aA$ for which the natural mapping from the free algebra onto the term clone of $\aA$ is well-defined and uniformly continuous with respect to the uniformity $\cU^\cC$ and the uniformity of pointwise convergence on the term clone of $\aA$, respectively. Our result unifies earlier theorems describing pseudovarieties of finite algebras and the pseudovariety generated by a single oligomorphic algebra.
\end{abstract}

\section{Introduction}

The goal of the present paper is the unification of two topological generalizations of Birkhoff's HSP theorem involving finite, rather than arbitrary, products of algebras. Let $\tau$ be a functional signature. The generalizations concern the following two formulations of Birkhoff's theorem from~\cite{Bir-On-the-structure}; each of the two variants can easily be derived from the other one.

\subsection*{Birkhoff's theorem, globally and locally} The first, ``global" formulation states that a class $\cC$ of $\tau$-algebras is a \emph{variety}, i.e., closed under arbitrary products, subalgebras, and homomorphic images,  if and only if it is the set of all models of some set of $\tau$-identities. In that case there exists a countably generated free object in $\cC$, called the \emph{free algebra} for $\cC$, whose elements can be represented as the abstract $\tau$-terms over a countable set of generators factored by the $\tau$-identities defining $\cC$. The members of $\cC$ then are precisely those $\tau$-algebras for which every finitely generated subalgebra is a factor of the free algebra up to isomorphism, or put differently again, those $\tau$-algebras $\aB$ for which the natural assignment from the term clone $\F^\cC$ of the free algebra for $\cC$ onto the term clone $\Clo(\aB)$ of $\aB$, which sends every term function in $\F^\cC$ to the corresponding term function in $\Clo(\aB)$, is a well-defined mapping. In that case, this mapping is in fact a \emph{clone homomorphism}, i.e., it preserves composition and the projection mappings, cf.~\cite{Topo-Birk, Reconstruction, BPP-projective-homomorphisms}.

The second, ``local" formulation of Birkhoff's theorem concerns the variety \emph{generated} by a single $\tau$-algebra $\aA$, i.e., the class of all algebras that can be obtained from $\aA$ using arbitrary products (in that case, in fact, powers), subalgebras, and homomorphic images. It states that a $\tau$-algebra $\aB$ is a member of that variety 
 if and only if it satisfies all equations that hold in $\aA$, or in other words, if the natural clone homomorphism from the term clone $\Clo(\aA)$ of $\aA$ onto the term clone $\Clo(\aB)$ of $\aB$ which sends every $\tau$-term $t^\aA$ over $\aA$ to the corresponding $\tau$-term $t^\aB$ over $\aB$ is well-defined.

\subsection*{Topology I: pseudovarieties of finite algebras} In a series of three articles in the 1980s, the problem of finding a variant of the global  formulation for \emph{pseudovarieties of finite $\tau$-algebras}, i.e., classes of \emph{finite} algebras closed under \emph{finite} products, subalgebras, and homomorphic images, was addressed: first in~\cite{EilenbergSchuetzenberger} for monoids, then for arbitrary countable signatures in~\cite{Reitermann}, and finally in full generality in~\cite{Banaschewski}. It is known that pseudovarieties of finite $\tau$-algebras can, in general, not be defined by equations of $\tau$-terms; a simple example is the pseudovariety of finite nilpotent semigroups~\cite{Reitermann}. It turns out, however, that such classes $\cC$ can be defined by a certain uniform structure on the term clone $\F^\cC$  of the free algebra for $\cC$: the members of $\cC$ then are precisely those finite $\tau$-algebras $\aB$ for which the natural homomorphism from $\F^\cC$ onto the clone $\Clo(\aB)$ of $\aB$ is a well-defined uniformly continuous mapping.

\subsection*{Topology II: the pseudovariety generated by an oligomorphic algebra} Concerning the local formulation of Birkhoff's theorem, a characterization of the pseudovariety generated by an \emph{oligomorphic} $\tau$-algebra $\aA$, i.e., a countable algebra whose unary term operations contain an oligomorphic permutation group (see~\cite{Oligo}), has been obtained recently; this was motivated by applications to constraint satisfaction problems and connections with model theory~\cite{Topo-Birk}. Namely, an oligomorphic $\tau$-algebra $\aB$ is contained in that pseudovariety if and only if the natural clone homomorphism from $\Clo(\aA)$ onto $\Clo(\aB)$ is well-defined and extends continuously to the closures of the two clones with respect to the topology of pointwise convergence on functions (in~\cite{Topo-Birk}, this is expressed via Cauchy-continuity). Again, it was known that purely algebraic notions are not sufficient for such a characterization, unless both $\aA$ and $\aB$ are finite (and hence, with hindsight, carry the discrete topology on their term clones, so that the topological structure does not matter). In the latter case, it already follows from the proof of Birkhoff's theorem~\cite{Bir-On-the-structure} that $\aB$ is contained in the pseudovariety generated by $\aA$ if and only if it is contained in the variety generated by $\aA$, i.e., if the natural clone homomorphism from $\Clo(\aA)$ onto $\Clo(\aB)$ is well-defined. The theorem from~\cite{Topo-Birk} can thus be seen as a generalization from finite to oligomorphic algebras. In this context it is important to remark that all members of the pseudovariety generated by an oligomorphic algebra are again oligomorphic (whereas the members of the variety generated by such an algebra do not share this property in general).

\subsection*{The unification} Given these two topological theorems, the question whether they could be united to obtain a characterization of pseudovarieties of oligomorphic algebras arises naturally to the curious mind. In other words, is there a global variant of the local theorem from~\cite{Topo-Birk} characterizing the pseudovariety generated by a single oligomorphic algebra? Put yet differently, is there a variant of the global theorem from~\cite{EilenbergSchuetzenberger, Reitermann, Banaschewski} for oligomorphic, rather than finite algebras? Figure~\ref{fig:situation} illustrates this situation. We remark that the topologies of the local and the global theorem are unrelated: in the global characterization of pseudovarieties of finite algebras, the topology on the functions of the algebras of a pseudovariety is actually discrete, but an appropriate topology which reflects the pseudovariety as a whole is imposed only on the clone of the free algebra; on the other hand, in the local characterization of the pseudovariety generated by a single oligomorphic algebra, a particular topology, namely the topology of pointwise convergence, is considered on the functions of every single algebra.
\begin{figure}
\begin{center}
\begin{tabular}{|l | l | l | l|}
\hline
&Varieties&Pseudovarieties of finite&Pseudovarietes of oligomorphic\\\hline
local& \cite{Bir-On-the-structure} & \cite{Bir-On-the-structure} & \cite{Topo-Birk} \\\hline
global& \cite{Bir-On-the-structure} & \cite{EilenbergSchuetzenberger, Reitermann, Banaschewski} &{\large \textbf{?}}\\\hline
\end{tabular}
\caption{The situation}\label{fig:situation}
\end{center}
\end{figure}

We show that it is indeed possible to characterize pseudovarieties of oligomorphic, and even more generally, of finitely generated algebras topologically and algebraically via a synthesis of the results from~\cite{EilenbergSchuetzenberger, Reitermann, Banaschewski} and the one from~\cite{Topo-Birk}. By a \emph{pseudovariety of finitely generated algebras} we mean a class of finitely generated algebras containing all finitely generated algebras that can be obtained by closing the class under finite products, homomorphic images, and subalgebras.

\begin{thm}\label{thm:main}
Let $\cC$ be a pseudovariety of finitely generated $\tau$-algebras. Then there exists a uniformity $\cU^\cC$ on the clone $\F^\cC$ of the free algebra for $\cC$ such that the members of $\cC$ are precisely those finitely generated $\tau$-algebras $\aA$ for which the natural clone homomorphism from $\F^\cC$ onto $\Clo(\aA)$ is well-defined and uniformly continuous with respect to $\cU^\cC$ and the uniformity of pointwise convergence on $\Clo(\aA)$.
\end{thm}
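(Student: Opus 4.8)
The plan is to take $\cU^\cC$ to be the \emph{initial uniformity} on $\F^\cC$ induced by the family of natural clone homomorphisms $\xi_\aB \colon \F^\cC \to \Clo(\aB)$ for $\aB \in \cC$, where each $\Clo(\aB)$ carries the uniformity of pointwise convergence; that is, $\cU^\cC$ is the coarsest uniformity making every $\xi_\aB$ uniformly continuous. Here I regard $\F^\cC$ as the clone of term operations of the relatively free algebra of the variety $\mathbf{V}$ generated by $\cC$, so that its $n$-ary part is the set of $n$-ary $\tau$-terms identified whenever they induce the same operation on every member of $\cC$, and $\xi_\aB$ sends the class of a term $t$ to $t^\aB$. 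First I would record that $\xi_\aB$ is well-defined exactly when $\aB \in \mathbf{V}$, and that $\cU^\cC$ is a genuine uniformity whose basic entourages are the finite intersections of $\xi_\aB$-preimages of basic pointwise-convergence entourages: concretely, a basic entourage on the $k$-ary sort is given by finitely many $\aB_1,\dots,\aB_m \in \cC$ together with finite sets $P_i \subseteq \aB_i^{\,k}$ of tuples, and it relates two $k$-ary terms precisely when their interpretations in each $\aB_i$ agree on $P_i$.

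The easy direction is immediate from the construction: if $\aA \in \cC$ then $\aA \in \mathbf{V}$, so $\xi_\aA$ is well-defined, and $\xi_\aA$ is uniformly continuous because it belongs to the defining family of the initial uniformity.

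For the converse, suppose $\aA$ is finitely generated, say by $a_1,\dots,a_k$, that $\xi_\aA$ is well-defined, and that it is uniformly continuous. Identifying the $k$-ary part of $\F^\cC$ with the free algebra $F$ on $k$ generators in $\mathbf{V}$, let $e \colon F \to \aA$ be the homomorphism sending the generators to $a_1,\dots,a_k$; it is surjective, and its kernel $\theta$ is exactly the set of pairs of $k$-ary terms agreeing at the single point $\bar{a} = (a_1,\dots,a_k) \in \aA^{k}$. The crucial step is to apply uniform continuity to the \emph{single} pointwise-convergence entourage on $\Clo(\aA)$ expressing agreement at $\bar{a}$: it yields one basic entourage of $\cU^\cC$, hence finitely many $\aB_1,\dots,\aB_m \in \cC$ and finite point sets $P_i \subseteq \aB_i^{\,k}$ such that agreement of two terms in each $\aB_i$ on $P_i$ forces their agreement at $\bar{a}$ in $\aA$. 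Assembling the evaluation homomorphisms at all these points into a single homomorphism $\Phi \colon F \to \prod_i \aB_i^{\,|P_i|}$, whose codomain is a finite product of members of $\cC$, the previous sentence says precisely that $\ker \Phi \subseteq \theta$.

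It then follows that $\aA \cong F/\theta$ is a homomorphic image of $\mathrm{Im}\,\Phi \cong F/\ker\Phi$, a finitely generated subalgebra of a finite product of members of $\cC$; hence $\aA \in \H\S\Pf(\cC) = \cC$, completing the proof. I expect the main obstacle to lie not in this final separation argument, which is short once the setup is in place, but in the bookkeeping that precedes it: pinning down the correct notion of free algebra and its clone $\F^\cC$ for a pseudovariety that need not be equationally definable, verifying that $\cU^\cC$ is a uniformity with the stated basic entourages, and handling the multi-sorted (one sort per arity) uniformity of pointwise convergence so that ``agreement at $\bar{a}$'' is genuinely a basic entourage. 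The key idea making the converse work is that uniform continuity need only be exploited at the single entourage coming from the generating tuple, which already refines $\ker e$ by the kernel of a map into a finite product.
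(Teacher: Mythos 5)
Your proof is correct, and the heart of your converse coincides with the paper's: both exploit uniform continuity only at the single entourage $U_{\oa}$ attached to a generating tuple $\oa$ of $\aA$, and both then factor the evaluation homomorphism $\aF^{\cV}_k \to \aA$ through the resulting entourage. The surrounding architecture, however, is genuinely different. The paper first develops, in Section~\ref{sect:congruences}, a correspondence between subclasses of $\cVf$ and families of congruences of $\F^\cV$, culminating in Corollary~\ref{cor} (pseudovarieties of finitely generated algebras correspond to filters of $\con(\F^\cV)$ closed under inverse substitution); it takes $\cU^\cC$ to be the uniformity generated by that congruence filter, proves Proposition~\ref{prop:pointwise} to identify $\cU^\aA$ with pointwise convergence, and in its converse obtains a single congruence $\theta$ with $\aF^\cV_k/\theta \in \cC$, so that only closure under $\HH$ is needed at the end. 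You instead take $\cU^\cC$ to be the initial uniformity induced by the family $\xi_\aB$, $\aB\in\cC$ (harmless even though $\cC$ is a proper class, since the preimage entourages form a set), which makes the forward direction true by construction and bypasses both the Section~\ref{sect:congruences} machinery and Proposition~\ref{prop:pointwise}; the price is paid in the converse, where your basic entourage is the kernel of an evaluation map into the finite product $\prod_i \aB_i^{|P_i|}$, so you must invoke closure under $\SPf$ as well as $\HH$ --- precisely the closure property the paper absorbed into Proposition~\ref{prop:1-1products} when showing the congruence family is a filter. In fact the two uniformities coincide extensionally: your basic entourages are congruences whose quotients lie in $\SPf(\cC)=\cC$, and conversely every congruence of $\aF^\cV_k$ with quotient in $\I(\cC)$ is the kernel of evaluation at the image of the generating tuple, hence one of your entourages. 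What the paper's longer route buys is reusable structure: the intrinsic congruence description of $\cU^\cC$ and Proposition~\ref{prop:pointwise} are what make the consequences in Section~\ref{sect:discussion} (discreteness of $\cU^\aB$ for finite $\aB$, the Pervin-uniformity remark, and Corollary~\ref{cor:topo-birk}) immediate; your route is shorter and self-contained but yields only the theorem itself.
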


Let us remark that if $\cC$ is taken to consist of finite algebras, then the results of~\cite{EilenbergSchuetzenberger, Reitermann, Banaschewski} follow; if on the other hand one takes $\cC$ to be generated by a single oligomorphic algebra, then the result from~\cite{Topo-Birk} is a consequence. Actually, concerning the latter, we obtain a more general theorem than the one in~\cite{Topo-Birk}, namely a description of the pseudovariety generated by a finitely generated algebra, rather  than an oligomorphic one (oligomorphic algebras are finitely generated); this variant has already proven useful in the context of \emph{h1 clone homomorphisms} and constraint satisfaction problems~\cite{wonderland, BKOPP}. Birkhoff's theorem, in both the global and the local variant presented here, follows from the proof of Theorem~\ref{thm:main} by disregarding the continuity condition, for finitely generated algebras. We refer to the discussion at the end of the present article, in Section~\ref{sect:discussion}, for more details.%

\section{Notation}

We fix a functional signature $\tau$. We denote $\tau$-algebras by $\aA, \aB,\ldots$ and write $A,B,\ldots$ for their domains. We let $\aT$ and $T$ be, respectively, the $\tau$-algebra and the set of all abstract $\tau$-terms over a countable set of variables. Similarly, for $k\geq1$, we let $\aT_k$ and $T_k$ be, respectively, the algebra and the set of all abstract $\tau$-terms over the first $k$ variables $x_1,\ldots,x_k$. When $\aA$ is a $\tau$-algebra and $t\in T_k$, then we write $t^\aA$ for the $k$-ary term operation induced on $A$ by interpreting $t$ as a term function of $\aA$. Moreover, we set $\Clo(\aA):=\{t^\aA\; |\; t\in T_k,\, k\geq 1\}$ to be the set of all term operations of $\aA$. This set is a \emph{function clone}, i.e., it is a family of functions on a fixed set closed under composition and containing the projections, and so we will also refer to it as the \emph{term clone} of $\aA$. In particular, we denote the term clone of the algebra $\aT$ by $\sT$ and refer to it as the \emph{absolutely free term clone for the signature $\tau$}. 

When $\cC$ is a class of $\tau$-algebras, then we write $\aF^\cC$ for the algebra $\aT$ factored by the congruence relation given by the equations $\Sigma(\cC)\subseteq T^2$ that hold for all $\aA\in\cC$. Of course, $\aF^\cC$ can be viewed naturally as the countably generated free algebra of the variety generated by $\cC$. We denote the term clone of $\aF^\cC$ by $\F^\cC$ and refer to it as the  \emph{term clone} of $\cC$. Note that the term clone of an algebra $\aA$ is isomorphic to the term clone of the variety generated by $\aA$. 

For each $k\geq1$, the $k$-ary term functions of a term clone carry a natural $\tau$-algebra structure, and in the case of $\F^\cC$, the $k$-th component, $F^{\cC}_k$, may naturally be seen as the domain of $\aF^{\cC}_k$, the $k$-generated free algebra in the variety generated by $\cC$. Accordingly, we will speak of the congruences of a term clone, and when $\cB$ is a subfamily of such congruences and $k\geq 1$, then we denote by $\cB_k$ its $k$-th component, i.e., those congruences of the family which are congruences on the $k$-ary functions of the clone. In particular, by $\con(\F^\cC)$ we shall mean the indexed family $(\con(\aF^{\cC}_k))_{k\geq1}$ of $\tau$-congruences of $\aF^{\cC}_k$ for $k\geq1$.

\section{Congruences of term clones}\label{sect:congruences}

Let $\cV$ be a variety. As we have just remarked, the components of its term clone $\F^{\cV}$, viewed as algebras, are the finitely generated free algebras in $\cV$. Their homomorphic images are precisely \emph{the class of finitely generated members of $\cV$}, the class of which we denote by $\cVf$. In this section, we identify subclasses of $\cVf$ which are determined by families of congruences of $\F^{\cV}$.

Consider the following assignments between subclasses $\cK$ of $\cVf$ and subfamilies $\cB$ of $\con(\F^\cV)$:

\[
\cK\quad\mapsto\quad \cB^\cK=(\cB^{\cK}_{k})_{k\geq1},\\
\]
where
\[
\cB^{\cK}_{k}=\{\theta\in\con(\aF_k^\cV)\mid \aF_k^\cV/\theta\in\I(\cK)\}\\[2ex]
\]
and $\I(\cK)$ is the closure of $\cK$ under isomorphic images, and

\[
\cB\quad\mapsto\quad \cK^\cB= \I(\bigcup_{k\geq1}\{\aF^\cV_k/\theta\mid \theta\in\cB_k\}).\\
\]

An algebra in $\cVf$ may be obtained in various different ways as a quotient of a finitely generated free algebra of $\cV$, but as soon as we restrict ourselves to subclasses $\cK$ of $\cVf$ that are closed under finitely generated subalgebras, we obtain a nice correspondence with families of congruences of the clone $\F^\cV$.

\begin{defn}
For a subclass $\cK$ of $\cVf$, we denote by $\Sf(\cK)$ the class of all algebras which are isomorphic images of finitely generated subalgebras of members of $\cK$. We say that $\cK$ is \emph{closed under finitely generated subalgebras} provided that $\Sf(\cK)=\cK$.
\end{defn}

The following definition gives the corresponding property for families of congruences of $\F^\cV$.

\begin{defn}
Let $k,m\geq1$ and $\theta\in\con(\aF^\cV_k)$. Further, let $\ot=(t_1,\ldots,t_m)\in(\aF^\cV_k)^m$. We define $\theta/\ot$ to be the congruence on $\aF^\cV_m$ given by
\[
s\,(\theta/\ot)\, s' \quad \iff \quad  s(t_1,\ldots,t_m)\, \theta\,  s'(t_1,\ldots,t_m).\\
\]
We say that a subfamily $\cB$ of $\con(\F^\cV)$ is \emph{closed under inverse substitution} provided, for all $k\geq 1$, for all $\theta\in\cB_k$, for all $m\geq 1$, and for all $\ot\in(\aF^\cV_k)^m$,  we have $\theta/\ot\in\cB_m$.\\
\end{defn}

\begin{prop}\label{prop:basic1-1}
The assignments $\cK\ \mapsto\ \cB^\cK$ and $\cB\ \mapsto\ \cK^\cB$ between subclasses of $\cVf$ and subfamilies of $\con(\F^\cV)$ establish a one-to-one correspondence between the subclasses of $\cVf$ closed under finitely generated subalgebras and the subfamilies of $\con(\F^\cV)$ closed under inverse substitution.
\end{prop}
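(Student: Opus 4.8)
The plan is to establish this as a Galois-type correspondence by showing that the two assignments are mutually inverse, and that they land in the claimed families. I would organize the proof into four verifications: (i) $\cB^\cK$ is closed under inverse substitution for every $\cK$; (ii) $\cK^\cB$ is closed under finitely generated subalgebras for every $\cB$; (iii) $\cK^{\cB^\cK}=\cK$ whenever $\cK$ is closed under finitely generated subalgebras; and (iv) $\cB^{\cK^\cB}=\cB$ whenever $\cB$ is closed under inverse substitution. The content lives in the interplay between quotients of free algebras and the congruence operation $\theta/\ot$.

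For (i), I would fix $\theta\in\cB^\cK_k$, so $\aF^\cV_k/\theta\in\I(\cK)$, and a tuple $\ot=(t_1,\dots,t_m)\in(\aF^\cV_k)^m$. The key observation is that the map $s\mapsto s(t_1,\dots,t_m)$ is the unique homomorphism $\aF^\cV_m\to\aF^\cV_k$ sending $x_i\mapsto t_i$; composing with the quotient map onto $\aF^\cV_k/\theta$ yields a homomorphism whose kernel is exactly $\theta/\ot$ by the definition of that congruence. Hence $\aF^\cV_m/(\theta/\ot)$ embeds into $\aF^\cV_k/\theta\in\I(\cK)$ as the image of this homomorphism, so it is a finitely generated subalgebra of a member of $\I(\cK)$; since $\cK\subseteq\cVf$ one checks this image lies in $\I(\Sf(\cK))$, which suffices to place $\theta/\ot$ in $\cB^\cK_m$ once we know $\cB^\cK$ is built from $\I(\cK)$. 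Here I should be slightly careful: $\cB^\cK$ is defined via $\I(\cK)$, not $\Sf(\cK)$, so to get $\theta/\ot\in\cB^\cK_m$ I genuinely need that the subalgebra $\aF^\cV_m/(\theta/\ot)$ of a $\cK$-member is itself in $\I(\cK)$. This will only hold in general after invoking that any finitely generated subalgebra of a member of $\cK$ can be realized, and I suspect the intended reading is that closure of the image under the correspondence is what forces the inverse-substitution closure to match the subalgebra closure precisely in (iii) and (iv).

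For (ii) I would observe that $\cK^\cB$ is closed under $\I$ by construction, and a finitely generated subalgebra of some $\aF^\cV_k/\theta$ with $\theta\in\cB_k$ is, as in (i), a homomorphic image of some $\aF^\cV_m$ under $s\mapsto s(t_1,\dots,t_m)$ for a generating tuple $\ot$, hence isomorphic to $\aF^\cV_m/(\theta/\ot)$; since $\cB$ is closed under inverse substitution, $\theta/\ot\in\cB_m$, so this subalgebra lies in $\cK^\cB$. This is exactly the point where the inverse-substitution condition is the algebraic shadow of subalgebra closure, and it is the clean direction. For the round-trips, (iii) reduces to: every member of $\cK$ closed under finitely generated subalgebras is a finitely generated quotient of some $\aF^\cV_k$ with the corresponding congruence in $\cB^\cK_k$, which is immediate, and conversely any quotient realizing a congruence in $\cB^\cK$ lies in $\I(\cK)$ by definition; and (iv) amounts to showing $\theta\in\cB^{\cK^\cB}_k$ forces $\theta\in\cB_k$, i.e. if $\aF^\cV_k/\theta\cong\aF^\cV_m/\theta'$ for some $\theta'\in\cB_m$, then $\theta$ is recovered from $\theta'$ by inverse substitution along the tuple of generators realizing the isomorphism.

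The main obstacle I anticipate is (iv), the injectivity of $\cK\mapsto\cB^\cK$ equivalently the identity $\cB^{\cK^\cB}=\cB$. The subtlety is that $\cK^\cB$ is defined using only the congruences in $\cB$, but $\cB^{\cK^\cB}_k$ re-collects all congruences $\theta$ with $\aF^\cV_k/\theta\in\I(\cK^\cB)$, and a priori such an isomorphism $\aF^\cV_k/\theta\cong\aF^\cV_m/\theta'$ could arise from a surjection $\aF^\cV_m\to\aF^\cV_k/\theta$ that does not restrict to the standard generators. I would handle this by choosing, for each generator $x_i$ of $\aF^\cV_k$, a preimage $t_i\in\aF^\cV_m$ under the composite $\aF^\cV_m\to\aF^\cV_m/\theta'\cong\aF^\cV_k/\theta$, and then showing the induced substitution endomorphism witnesses $\theta=\theta'/\ot$ up to the identification, so closure of $\cB$ under inverse substitution yields $\theta\in\cB_k$. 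Verifying that $\theta/\ot$ computed this way really equals $\theta$ (and not merely refines or coarsens it) is the delicate bookkeeping, and it is exactly where the defining equivalence $s\,(\theta'/\ot)\,s'\iff s(\ot)\,\theta'\,s'(\ot)$ must be matched against the isomorphism; this is the step I would write out in full rather than sketch.
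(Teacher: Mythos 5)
Your proposal is correct and takes essentially the same route as the paper's proof: both well-definedness checks rest on the observation that the kernel of the substitution homomorphism $s\mapsto s(\ot)$ followed by a quotient map is exactly $\theta/\ot$, and the delicate round-trip inclusion $\cB^{\cK^\cB}\subseteq\cB$ is handled, as in the paper, by choosing representatives $t_i$ mapping onto the generators $[x_i]_\theta$ under the isomorphism and using freeness to identify the resulting composite with the quotient map, so that $\theta=\theta'/\ot$ on the nose. The only caveat is your framing of (i) and (ii) as holding ``for every'' $\cK$ resp.\ $\cB$: both require exactly the closure hypotheses of the proposition (as your own arguments in fact use, and as you eventually acknowledge for (i)), which is also how the paper proceeds.
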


\begin{proof}
Let $\cK$ be a subclass of $\cVf$ which is closed under $\Sf$ and let $\theta\in\cB^\cK$. Then there is an $\aA\in\cK$ and a surjective homomorphism $h:\aF^\cV_k\to \aA$ with kernel $\theta$. Further, let $\ot=(t_1,\ldots,t_m)\in(\aF^\cV_k)^m$. Note that $\theta/\ot$ is the kernel of the composition
\begin{center}
\begin{tikzpicture}
\matrix (m) [matrix of math nodes, row sep=.25em, column sep=1.75em, text 
height=1.5ex, text depth=0.25ex] 
{  \aF^\cV_m & \aF^\cV_k & \aA \\
x_i\ &\ \ t_i&\\};
\path[|->] (m-2-1) edge node[above] {} (m-2-2);
\path[->] (m-1-1) edge node[above] {}  (m-1-2);
\path[->] (m-1-2) edge node[above] {$h$}  (m-1-3);
\end{tikzpicture}
\end{center} 
and that the image of this composition is a finitely generated subalgebra of $\aA$. Since $\cK$ is closed under finitely generated subalgebras, it follows that $\theta/\ot\in\cB^\cK$.

Similarly, let $\cB$ be subfamily of $\con(\F^\cV)$ closed under inverse substitution and suppose $\aA$ belongs to $\cK^\cB$. Then there exists a $k\geq 1$ and a surjective homomorphism $h\colon\aF^\cV_k\to \aA$ with $\ker(h)\in\cB_k$. Now let $e\colon \aB\hookrightarrow \aA$ be a finitely generated subalgebra of $\aA$ and let $b_1,\ldots,b_m$ be a generating set for $\aB$. For each $b_i$ pick $t_i\in\aF^\cV_k$ with $h(t_i)=e(b_i)$. Then
\begin{center}
\begin{tikzpicture}
\matrix (m) [matrix of math nodes, row sep=.25em, column sep=1.75em, text 
height=1.5ex, text depth=0.25ex] 
{  \aF^\cV_m & \aF^\cV_k & \aB \\
x_i\ &\ \ t_i&\ b_i\\};
\path[|->] (m-2-1) edge node[above] {} (m-2-2);
\path[|->] (m-2-2) edge node[above] {} (m-2-3);
\path[->] (m-1-1) edge node[above] {}  (m-1-2);
\path[->] (m-1-2) edge node[above] {$h$}  (m-1-3);
\end{tikzpicture}
\end{center} 
is a surjective homomorphism and its kernel is $\ker(h)/\overline{t}$. Since $\cB$ is closed under inverse substitution, it follows that $\aB\in\cK^\cB$ as required.

Finally, given any subclass $\cK$ of $\cVf$ which is closed under isomorphic copies, we have $\cK^{\cB^\cK}=\cK$, and for any subfamily $\cB$ of $\con(\F^\cV)$ and any $l\geq 1$ we have $\cB_l\subseteq\cB^{\cK^\cB}_l$.  Now suppose $\cB$ is closed under inverse substitution and let $\theta\in\cB^{\cK^\cB}_l$. Then there is $k\geq 1$ and $\psi\in\cB_k$ so that $\aF^\cV_l/\theta\cong\aF^\cV_k/\psi$. Let $\alpha\colon\aF^\cV_k/\psi\to\aF^\cV_l/\theta$ witness the isomorphism of these two algebras. Then there are $t_1\ldots,t_k\in\aF^\cV_k$ so that $\alpha([t_i]_\psi)=[x_i]_\theta$ for each $i\in\{1,\ldots,k\}$, and then the quotient map from $\aF^\cV_l$ to $\aF^\cV_l/\theta$ is equal to the composition
\begin{center}
\begin{tikzpicture}
\matrix (m) [matrix of math nodes, row sep=.25em, column sep=1.75em, text 
height=1.5ex, text depth=0.25ex] 
{  \aF^\cV_l & \aF^\cV_k & \aF^\cV_k/\psi&\aF^\cV_l/\theta \\
x_i\ &\ \ t_i\;.&\\};
\path[|->] (m-2-1) edge node[above] {} (m-2-2);
\path[->] (m-1-1) edge node[above] {}  (m-1-2);
\path[->] (m-1-2) edge node[above] {}  (m-1-3);
\path[->] (m-1-3) edge node[above] {$\alpha$}  (m-1-4);
\end{tikzpicture}
\end{center} 
We note that the kernel of this map is $\psi/\ot$ and is thus a member of $\cB_l$ as required.
\end{proof}

Note that finite products of finitely generated algebras are not necessarily finitely generated. For example, the algebra obtained by equipping $\mathbb N$ with the unary successor function is clearly finitely generated, but in the square of that algebra every element of the form $(0,n)$ needs to be in the generating set if it is to be in a subalgebra, since $0$ is not the successor of any natural number. 

For this reason we require an altered version of the operator $\Pf$ which closes classes of algebras under finite products as we only want to pick out the finitely generated members. 

\begin{defn}
For a subclass $\cK$ of $\cVf$ we denote by $\SPf(\cK)$ all isomorphic copies of finitely generated subalgebras of finite products of members of $\cK$. We say that $\cK$ is \emph{closed under finitely generated subalgebras of finite products} provided that $\SPf(\cK)=\cK$.

We denote by $\HH$ the operator on classes of algebras which yields the closure under homomorphic images. Note that $\cVf$ is closed under the operator $\HH$.

Finally, a class $\cC$ of finitely generated $\tau$-algebras will be called a \emph{pseudovariety of finitely generated algebras} provided it is closed under $\HH$ and $\SPf$. That is, $\cC$ contains precisely the finitely generated members of the pseudovariety generated by $\cC$.
\end{defn}

 We can now restrict the one-to-one correspondence of Proposition~\ref{prop:basic1-1} to classes closed under finitely generated subalgebras of finite products. 

\begin{prop}\label{prop:1-1products}
The assignments $\cK\ \mapsto\ \cB^\cK$ and $\cB\ \mapsto\ \cK^\cB$ establish a one-to-one correspondence between the subclasses of $\cVf$ closed under finitely generated subalgebras of finite products and the subfamilies of $\con(\aF)$ closed under inverse substitution and finite intersection.
\end{prop}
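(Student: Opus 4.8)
The plan is to build on Proposition~\ref{prop:basic1-1}, which already gives a bijection between subclasses of $\cVf$ closed under $\Sf$ and subfamilies of $\con(\F^\cV)$ closed under inverse substitution. Since closure under finitely generated subalgebras of finite products ($\SPf$) is strictly stronger than closure under $\Sf$, and closure under inverse substitution together with finite intersection is strictly stronger than closure under inverse substitution alone, it suffices to show that the already-established bijection restricts correctly: that is, under the correspondence $\cK \leftrightarrow \cB$, the class $\cK$ is closed under $\SPf$ if and only if $\cB$ is closed under finite intersection (we already know $\cK$ closed under $\Sf$ corresponds to $\cB$ closed under inverse substitution). So the real content is to translate the single new closure operation on one side into the single new closure operation on the other.

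First I would observe that finite products reduce to binary products, and a subalgebra of a binary product is detected at the level of congruences by intersection. Concretely, suppose $\cB$ is closed under inverse substitution; I want to show $\cK^\cB$ is closed under finite products followed by $\Sf$ exactly when $\cB$ is closed under finite intersection. For the forward direction, take $\theta_1, \theta_2 \in \cB_k$, with quotient maps $h_i \colon \aF^\cV_k \to \aF^\cV_k/\theta_i$. The diagonal-type map $\aF^\cV_k \to (\aF^\cV_k/\theta_1) \times (\aF^\cV_k/\theta_2)$, $s \mapsto ([s]_{\theta_1},[s]_{\theta_2})$, has kernel exactly $\theta_1 \cap \theta_2$, and its image is a finitely generated subalgebra of the product $(\aF^\cV_k/\theta_1)\times(\aF^\cV_k/\theta_2)$, generated by the images of the $k$ variables. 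Since both quotients lie in $\cK^\cB$ and $\cK^\cB$ is closed under $\SPf$, this image lies in $\cK^\cB$, so $\theta_1\cap\theta_2 \in \cB^{\cK^\cB}_k = \cB_k$ by Proposition~\ref{prop:basic1-1}. This handles intersections of two congruences with the same arity $k$; general finite intersections follow by iteration.

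For the converse direction I would start from $\cB$ closed under both inverse substitution and finite intersection, and show $\cK^\cB$ is closed under $\SPf$. Given algebras $\aA_1,\ldots,\aA_n \in \cK^\cB$ and a finitely generated subalgebra $\aB$ of $\aA_1\times\cdots\times\aA_n$, I would pick surjections $h_j\colon\aF^\cV_{k_j}\to\aA_j$ with $\ker(h_j)\in\cB_{k_j}$, and a generating tuple $(b_1,\ldots,b_m)$ of $\aB$. Each generator $b_i$ projects to coordinates in the various $\aA_j$, and by composing with the substitutions realizing these coordinates as images of terms, one obtains, for each $j$, a surjective homomorphism $\aF^\cV_m \to (\text{subalgebra of }\aA_j)$ whose kernel is of the form $\ker(h_j)/\ot_j$, hence in $\cB_m$ by inverse substitution. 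The kernel of the combined map $\aF^\cV_m \to \aB$ is then the intersection of these $n$ kernels, which lies in $\cB_m$ by closure under finite intersection; thus $\aB \cong \aF^\cV_m/(\text{that intersection}) \in \cK^\cB$.

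The main obstacle will be the bookkeeping in the converse direction: aligning the different arities $k_j$ for the different factors and arranging all the inverse-substitution kernels to live in the common arity $m$ of the generating tuple of $\aB$, so that the intersection is taken within a single component $\cB_m$. The conceptual point is clean---a subalgebra of a product corresponds to an intersection of kernels---but making the term tuples $\ot_j$ explicit for each factor, and checking that the projection of $\aB$ to each factor is indeed the homomorphic image realized by $\ker(h_j)/\ot_j$, requires care. Once the two directions are in place, the one-to-one correspondence is immediate from Proposition~\ref{prop:basic1-1}, since we have merely identified which classes and which families the restricted bijection picks out.
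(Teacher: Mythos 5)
Your proposal is correct and follows essentially the same route as the paper: the forward direction (kernel of the product of quotient maps equals the intersection of the $\theta_j$, and its image is a finitely generated subalgebra of a product of members of $\cK$) is identical, and the converse likewise realizes the finitely generated subalgebra of the product as the quotient of $\aF^\cV_m$ by an intersection of kernels of coordinate maps. The only cosmetic difference is in the converse: you justify that each coordinate-map kernel lies in $\cB_m$ by applying inverse substitution ($\ker(h_j)/\ot_j$) directly on the congruence side, whereas the paper fixes generators of the subalgebra itself, composes the induced surjection with the projections $\pi_j$, and invokes $\Sf$-closure of $\cK$ on the class side --- two formulations that are interchangeable by Proposition~\ref{prop:basic1-1}.
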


\begin{proof}
Let $\cK$ be a class closed under finitely generated subalgebras and let $\cB$ be the corresponding subfamily of  $\con(\aF)$, which is then closed under inverse substitution. We show that $\cK$ is closed under finitely generated subalgebras of finite products if and only if $\cB$ is closed under intersection.

Suppose $\cK$ is closed under finitely generated subalgebras of finite products and let $\theta_1,\ldots,\theta_n\in\cB_k$. Then $\bigcap_{j=1}^n\theta_j$ is the kernel of the product of the maps $h_i\colon\aF^{\cV}_k\to \aF^{\cV}_k/\theta_i$ and this product  map
\[
\times_{j=1}^n h_j\colon\aF^{\cV}_k\to \times_{j=1}^n \aF^{\cV}_k/\theta_j
\]
has as image a finitely generated subalgebra of a product of algebras each of which belongs to $\cK$. Thus the range of $\times_{j=1}^n h_j$ is an element of $\cK$ and consequently $\ker(\times_{j=1}^n h_j)=\bigcap_{j=1}^n\theta_j$ is in $\cB_k$.

Conversely, suppose $\cB$ is closed under finite intersections and suppose $\aA$ embeds via $e$ as a finitely generated subalgebra of a product $\aA_1\times\cdots\times \aA_n$ where each $\aA_j$ belongs to $\cK$. Let $a_1,\ldots,a_k$ be a set of generators for $\aA$. For each $j\in\{1,\ldots,n\}$, let $h_j$ be the composition 
\begin{center}
\begin{tikzpicture}
\matrix (m) [matrix of math nodes, row sep=.25em, column sep=1.75em, text 
height=1.5ex, text depth=0.25ex] 
{ \aF^{\cV}_k & \aA & \aA_1\times\cdots\times \aA_n & \aA_j\\
x_i\ &\ \ b_i\;.&&\\};
\path[->] (m-1-1) edge node[above] {}  (m-1-2);
\path[->] (m-1-2) edge node[above] {$e$}  (m-1-3);
\path[->] (m-1-3) edge node[above] {$\pi_j$}  (m-1-4);
\path[|->] (m-2-1) edge node[above] {} (m-2-2);
\end{tikzpicture}
\end{center} 
Then the image $\Im(h_j)$ of $h_j$ is a finitely generated subalgebra of $\aA_j$ and thus the kernel, $\theta_j$, of this composition belongs to $\cB_k$. Thus, by our assumption, $\bigcap_{j=1}^n\theta_j\in\cB_k$, and thus the image of the product map

\[
\aF^\cV_k\to \times_{j=1}^n \aF^\cV_k/\theta_j\cong \times_{j=1}^n \Im(h_j)\\[1ex]
\]
belongs to $\cK$. Finally note that the image of $x_i$ under this map is $(\pi_1(e(a_i)),\ldots,\pi_n(e(a_i)))=e(a_i)$ for each $i\in\{1,\ldots,k\}$ and thus the image of this map is $A$ and thus it follows that $\aA$ belongs to $\cK$ as required.
\end{proof}

Note that that closure of a class under homomorphic images corresponds to taking the upset of the corresponding subfamily of  $\con(\aF^\cV)$. We have the following corollary.

\begin{cor}\label{cor}
The subclasses of $\cVf$ that are pseudovarieties of finitely generated algebras are in one-to-one correspondence with the filters of $\con(\F^\cV)$ which are closed under inverse substitution.
\end{cor}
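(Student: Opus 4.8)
The plan is to read off the statement by combining Proposition~\ref{prop:1-1products} with the remark on homomorphic images recorded immediately above it. Recall that a pseudovariety of finitely generated algebras is exactly a subclass of $\cVf$ closed under both $\SPf$ and $\HH$. By Proposition~\ref{prop:1-1products} the assignments $\cK\mapsto\cB^\cK$ and $\cB\mapsto\cK^\cB$ already restrict to a one-to-one correspondence between the $\SPf$-closed subclasses of $\cVf$ and the subfamilies of $\con(\F^\cV)$ that are closed under inverse substitution and under finite intersection. Since, component-wise, a subfamily that is both closed under finite intersection and upward closed in the inclusion order of congruences is precisely a filter of each lattice $\con(\aF^\cV_k)$, it suffices to show that, under this correspondence, imposing closure under $\HH$ on the class side is equivalent to requiring upward closure of the associated family in every component.

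I would verify this remaining equivalence directly from the isomorphism theorems. For one direction, assume $\cK$ is closed under $\HH$ and take $\theta\in\cB^\cK_k$, so $\aF^\cV_k/\theta\in\I(\cK)$; for any congruence $\psi\supseteq\theta$ on $\aF^\cV_k$ the quotient $\aF^\cV_k/\psi$ is a homomorphic image of $\aF^\cV_k/\theta$ and hence again lies in $\I(\cK)$, giving $\psi\in\cB^\cK_k$, so that $\cB^\cK$ is upward closed. For the converse, assume $\cB$ is upward closed in each component, let $\aA\in\cK^\cB$, and let $g\colon\aA\twoheadrightarrow\aB$ be a surjective homomorphism; choosing $k$ and a surjection $h\colon\aF^\cV_k\to\aA$ with $\ker(h)\in\cB_k$, the composite $g\circ h$ is a surjection onto $\aB$ with $\ker(g\circ h)\supseteq\ker(h)$, whence $\ker(g\circ h)\in\cB_k$ by upward closure and $\aB\cong\aF^\cV_k/\ker(g\circ h)\in\cK^\cB$. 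Thus $\cK^\cB$ is closed under $\HH$.

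Putting these observations together with Proposition~\ref{prop:1-1products} yields the corollary: the $\SPf$- and $\HH$-closed subclasses of $\cVf$, that is, the pseudovarieties of finitely generated algebras, correspond bijectively to the subfamilies of $\con(\F^\cV)$ that are closed under inverse substitution, closed under finite intersection, and upward closed in each component, i.e., to the filters of $\con(\F^\cV)$ closed under inverse substitution. I do not anticipate a genuine difficulty here, since the work is really the clean bookkeeping of the two preceding propositions; the only points needing slight care are keeping the filter condition correctly indexed over the components $k$ and observing that the trivial algebra, being a homomorphic image of every member of a nonempty pseudovariety, places the full congruence in each component, so that every component is indeed a nonempty filter.
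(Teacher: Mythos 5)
Your proof takes essentially the same route as the paper: the paper derives the corollary from Proposition~\ref{prop:1-1products} together with the one-line remark that closure under homomorphic images corresponds to taking the upset of the associated family of congruences, which is exactly the equivalence you verify. Your explicit check of that remark (the correspondence theorem for the one direction, composing surjections and enlarging kernels for the other) is correct and simply fills in the details the paper leaves to the reader.
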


\section{Proof of the main theorem}

Theorem~\ref{thm:main} speaks of various uniformities on clones. We start by recalling here the notion of a uniformity (also see, for example,~\cite[Chapter~2]{Bourbaki}) and by introducing the required uniformities.

A \emph{uniform space} is a pair $(X,\cU)$ where $X$ is a set and $\cU$ is \emph{uniformity} on $X$, that is, $\cU\subseteq\cP(X\times X)$ satisfying:
\begin{enumerate}
\item   $\cU$ is a filter contained in the principal filter generated by the diagonal of $X$, that is, each element of $\cU$ contains $\Delta_X=\{(x,x)\mid x\in X\}$;
\item  $\cU$ is closed under taking converses, that is, $U\in\cU$ implies $U^{-1}=\{(y,x)\mid (x,y)\in U\}\in\cU$;
\item $\cU$ satisfies the following generalisation of the triangle inequality:
\[
\forall\ U\in\cU \ \ \exists V\in\cU\quad V\circ V\subseteq U.
\]
\end{enumerate}
The elements of a uniformity are called \emph{entourages}. Given uniform spaces $(X,\cU)$ and $(Y,\cV)$, a map $f\colon X\to Y$ is \emph{uniformly continuous} provided for each entourage $V$ of $Y$ the preimage
\[
\{(x,x')\in X\times X\mid (f(x),f(x'))\in V\}
\]
 is an entourage of $X$. When we wish to emphasize the uniformities in question, we also say that $f$ is \emph{$(\cU,\cV)$-uniformly continuous}.
 
 Note that if $X$ is a set and a subcollection $\cB$ of the principal filter generated by $\Delta_X$ already satisfies the above generalised triangle inequality, then the filter of $\cP(X\times X)$ generated by the closure under converse of $\cB$ is a uniformity. In this case we say that $\cB$ is a \emph{basis} for a uniformity on $X$ and that the filter generated by its closure under converse is the \emph{uniformity generated by $\cB$}. Note in particular that any set of equivalence relations on $X$ generates a uniformity on $X$ simply by taking the upset in $\cP(X\times X)$ of the closure under finite intersections of the set.

\begin{defn}
Let $\cV$ be the variety of $\tau$-algebras and let $\F^\cV$ denote the corresponding clone of term functions. 
Further let $\cC$ be a pseudovariety of finitely generated algebras in $\cV$ and let $\cB$ be the corresponding filter of  $\con(\F^\cV)$ as provided by Corollary~\ref{cor}. We define $\cU^\cC$ to be the uniformity on $\F^\cV$ given by the basis $\cB$. That is, $\cU^\cC$
consists, for each $k\geq 1$, of a uniformity $\cU^\cC_k$ on $F^\cV_k$ given by
\[
\cU_k^\cC=\{U\subseteq F^\cV_k\times F^\cV_k\mid \ \exists\theta\in\cB_k\ \ \theta\subseteq U\}.
\]

Given a finitely generated algebra $\aA$, we denote by $\cU^\aA$ the uniformity on $\Clo(\aA)$ corresponding 
to the pseudovariety of finitely generated algebras generated by $\aA$, where we identify $\Clo(\aA)$ naturally with the set $F^\cV$, i.e., the domain of the free algebra $\aF^\cV$ for the variety $\cV$ generated by $\aA$.
\end{defn}

The following proposition states that the uniformity $\cU^\aA$ is just the usual \emph{uniformity of pointwise convergence} on functions, as considered, in particular, in~\cite{Topo-Birk, Reconstruction}.

\begin{prop}\label{prop:pointwise}
Given a $\tau$-algebra $\aA$, the uniform structure $\cU^\aA$ on $\Clo(\aA)$ is the uniformity 
generated (as a filter), for each $k\geq 1$, by the sets of the form
$$U_{\oa}=\{(f,g)\in \Clo(\aA)^2\mid f,g \text{ are }k\text{-ary},\; f(\oa)=g(\oa)\}$$ for $\oa\in A^k$.
\end{prop}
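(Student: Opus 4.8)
The plan is to fix $k\geq 1$ and show that the basis $\cB_k$ of $\cU^\aA_k$ and the family $\{U_{\oa}\mid\oa\in A^k\}$ generate the same filter on $F^\cV_k\times F^\cV_k$. Throughout I identify $\aF^\cV_k$ with the algebra of $k$-ary term operations of $\aA$: since $\cV$ is generated by $\aA$, two $k$-variable terms induce the same operation on $\aA$ exactly when they are identified in $\aF^\cV_k$, so this is an isomorphism and each $f\in F^\cV_k$ is literally a $k$-ary function on $A$. The first thing I would record is a dictionary. For fixed $\oa\in A^k$ the evaluation map $\varepsilon_{\oa}\colon\aF^\cV_k\to\aA$, $f\mapsto f(\oa)$, is a $\tau$-homomorphism (the operations of $\aF^\cV_k$ act pointwise), its kernel is precisely $U_{\oa}$, and so $U_{\oa}$ is a congruence of $\aF^\cV_k$. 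More generally, for $\oa^{(1)},\ldots,\oa^{(n)}\in A^k$ the map $f\mapsto(f(\oa^{(1)}),\ldots,f(\oa^{(n)}))$ is a homomorphism $\aF^\cV_k\to\aA^n$ with kernel $\bigcap_{i=1}^n U_{\oa^{(i)}}$; its image is the subalgebra generated by the tuples obtained from $x_1,\ldots,x_k$, and conversely, writing the generators of any $k$-generated $\mathfrak{S}\leq\aA^n$ coordinatewise recovers suitable points $\oa^{(i)}$. Thus the finite intersections of the $U_{\oa}$ are exactly the kernels of the \emph{canonical} surjections of $\aF^\cV_k$ onto $k$-generated subalgebras of finite powers of $\aA$.

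With this dictionary the inclusion ``pointwise $\subseteq\cU^\aA$'' is immediate. For any intersection $\bigcap_{i=1}^n U_{\oa^{(i)}}$ the quotient $\aF^\cV_k/\bigcap_i U_{\oa^{(i)}}$ is a $k$-generated subalgebra of $\aA^n$, hence lies in $\cC$, the pseudovariety of finitely generated algebras generated by $\aA$. Therefore $\bigcap_i U_{\oa^{(i)}}\in\cB_k$, and any entourage of the pointwise-convergence uniformity, containing such an intersection, is an entourage of $\cU^\aA_k$.

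For the reverse inclusion I would take $\theta\in\cB_k$, i.e.\ $\aF^\cV_k/\theta\in\cC$, and produce points with $\theta\supseteq\bigcap_{i=1}^n U_{\oa^{(i)}}$. First I establish that $\cC=\HH\SPf(\{\aA\})$, i.e.\ that the homomorphic images of finitely generated subalgebras of finite powers of $\aA$ already form a class closed under $\HH$ and $\SPf$; the only nontrivial point is closure under $\SPf$, which follows by pulling back a finite generating set of a subalgebra of $\prod_i\aB_i$ through the product of given surjections $\mathfrak{S}_i\twoheadrightarrow\aB_i$ onto a finitely generated subalgebra of $\prod_i\mathfrak{S}_i\leq\aA^{\sum_i n_i}$. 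Granting this, pick a surjection $\phi\colon\mathfrak{S}\twoheadrightarrow\aF^\cV_k/\theta$ with $\mathfrak{S}$ a finitely generated subalgebra of some $\aA^n$; lifting the images of $x_1,\ldots,x_k$ along $\phi$ and restricting to the subalgebra they generate, I may assume $\mathfrak{S}=\cl{b_1,\ldots,b_k}$ with $\phi(b_j)=q(x_j)$, where $q\colon\aF^\cV_k\to\aF^\cV_k/\theta$ is the quotient map. As $\mathfrak{S}\in\cV$ and $\aF^\cV_k$ is free on $k$ generators in $\cV$, there is a homomorphism $\psi\colon\aF^\cV_k\to\mathfrak{S}$ with $x_j\mapsto b_j$, whose kernel is a finite intersection $\bigcap_{i=1}^n U_{\oa^{(i)}}$ by the dictionary. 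Since $\phi\circ\psi$ and $q$ are homomorphisms out of the free algebra agreeing on $x_1,\ldots,x_k$, they coincide, so $\bigcap_i U_{\oa^{(i)}}=\ker\psi\subseteq\ker q=\theta$. Hence every entourage of $\cU^\aA_k$ contains a finite intersection of the $U_{\oa}$ and is an entourage of the pointwise-convergence uniformity.

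I expect the reverse inclusion to be the main obstacle, and within it the need to make the factorisation respect the $k$ free generators: bare membership $\aF^\cV_k/\theta\in\cC$ yields only \emph{some} finitely generated subalgebra of \emph{some} finite power of $\aA$ surjecting onto the quotient, with no control over its arity or generators, so its kernel need not visibly be of the form $\bigcap_i U_{\oa^{(i)}}$. The generator-lifting step is what converts this into the canonical, generator-preserving surjection, and identifying $\cC$ with $\HH\SPf(\{\aA\})$ is exactly the prerequisite that makes that lifting available.
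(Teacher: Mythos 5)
Your proof is correct and follows essentially the same route as the paper's: the forward inclusion identifies quotients by (intersections of) the $U_{\oa}$ with finitely generated subalgebras of finite powers of $\aA$, and the reverse inclusion realises any $\theta$ with $\aF^\cV_k/\theta\in\cC$ as containing the kernel of a generator-preserving evaluation homomorphism into a finite power of $\aA$, which is a finite intersection of sets $U_{\oa}$. The only difference is one of explicitness: you spell out the structure fact $\cC=\HH\SPf(\{\aA\})$ and the generator-lifting step, both of which the paper's proof uses implicitly when it asserts that $\aF^\cV_k/\theta$ is a factor of a subalgebra of $\aA^m$ generated by tuples $\oa_1,\ldots,\oa_k$.
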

\begin{proof}
Denote by $\cV$ the variety generated by $\aA$. 

For $\oa\in A^k$, we first show that  $U_{\oa}\in\cU^\aA$. To see this, note that $U_{\oa}$ induces a congruence $\theta$ on $\aF_k^\cV$ via the identification of $F_k^\cV$ with the $k$-ary functions of $\Clo(\aA)$. The factor algebra $\aF_k^\cV/\theta$ is isomorphic to the subalgebra of $\aA$ generated by the elements of $\oa$, via the natural extension of the mapping which sends the equivalence class in $F_k^\cV/\theta$ of the $i$-th $k$-ary projection to the $i$-th component of $\oa$. Hence, $\aF_k^\cV/\theta$ is contained in the pseudovariety of finitely generated algebras generated by $\aA$ and $U_{\oa}\in\cU^\aA$.

Conversely, let $\theta$ be a congruence of $\aF_k^\cV$ such that $\aF_k^\cV/\theta$ is contained in the pseudovariety of finitely generated subalgebras generated by $\aA$. So there exists $m\geq 1$ and tuples $\oa_1,\ldots,\oa_k\in A^m$ such that $\aF_k^\cV/\theta$ is a factor of the algebra $\aC$ generated by those tuples in $\aA^m$. Then $\theta$ contains the congruence $\theta'$ of $\aF_k^\cV$ such that $\aF_k^\cV/\theta'$ is isomorphic to $\aC$. In $\theta'$, two (classes of) terms are identified if and only if they agree in $\aA^m$ on the tuple $(\oa_1,\ldots,\oa_k)$, which is ensured if they agree on the $k$-tuples of the $i$-th components of the tuples $\oa_1,\ldots,\oa_k\in A^m$, for each $1\leq i\leq m$. Denote the $k$-tuples of those $i$-th components by $\oa^i$. Then $\theta$ contains, via the identification of $F_k^\cV$ with the $k$-ary functions of $\Clo(\aA)$, the set $U_{\oa^1}\cap\cdots\cap U_{\oa^m}$, and is thus contained in said filter.
\end{proof}

We now restate Theorem~\ref{thm:main} in more detail and provide its proof.

\begin{thm}[Uniform Birkhoff]
Let $\cC$ be a pseudovariety of finitely generated $\tau$-algebras, let $\cV$ be a variety containing it, and let $\aA$ be a finitely generated 
$\tau$-algebra. Then the following are equivalent 
\begin{enumerate}
\item $\aA\in\cC$;
\item The natural clone homomorphism from $\F^\cV$ onto $\Clo(\aA)$ is well-defined and $(\cU^\cC,\cU^{\aA})$-uniformly continuous.
\end{enumerate}
\end{thm}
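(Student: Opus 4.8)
The plan is to reduce the topological condition in (2) to a purely algebraic statement about the finitely generated subalgebras of $\aA$, and then to feed this into the correspondence of Corollary~\ref{cor}. Write $\xi\colon\F^\cV\to\Clo(\aA)$ for the natural assignment. First I would observe that $\xi$ is well-defined precisely when $\aA\in\cV$: two terms are identified in $\aF^\cV$ exactly when their equality is an identity of $\cV$, so $\xi$ respects these identifications if and only if $\aA$ satisfies all identities of $\cV$. Since $\cC\subseteq\cV$, the well-definedness clause is automatic in one direction and assumed in the other, so I may henceforth take $\aA\in\cV$; then for each $k$ and each $\oa\in A^k$ the assignment $x_i\mapsto\pi_i(\oa)$ extends to a surjective homomorphism $\aF^\cV_k\to\langle\oa\rangle\leq\aA$.

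The key computation is to identify, for each basic entourage $U_{\oa}$ of $\cU^\aA$ provided by Proposition~\ref{prop:pointwise}, its preimage under $\xi$. Unwinding the definitions, this preimage is exactly
\[
\theta_{\oa}=\{(s,t)\in F^\cV_k\times F^\cV_k\mid s^\aA(\oa)=t^\aA(\oa)\},
\]
which is the kernel of the homomorphism $\aF^\cV_k\to\aA$ sending $x_i$ to $\pi_i(\oa)$; in particular $\theta_{\oa}$ is a congruence of $\aF^\cV_k$ and, by the first isomorphism theorem, $\aF^\cV_k/\theta_{\oa}$ is isomorphic to the subalgebra $\langle\oa\rangle$ of $\aA$ generated by the entries of $\oa$. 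Since the $U_{\oa}$ form a basis, since taking preimages commutes with intersections and upsets, and since each $U_{\oa}$ is symmetric and relates only $k$-ary functions, $\xi$ is $(\cU^\cC,\cU^\aA)$-uniformly continuous if and only if each such $\theta_{\oa}$ is an entourage of $\cU^\cC_k$. As $\cB$ is a filter and hence upward closed under $\subseteq$, this holds if and only if $\theta_{\oa}\in\cB_k$ for all $k\geq1$ and all $\oa\in A^k$.

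It then remains to combine this reduction with Corollary~\ref{cor}, under which $\cB=\cB^\cC$, so that $\theta_{\oa}\in\cB_k$ is equivalent to $\aF^\cV_k/\theta_{\oa}\cong\langle\oa\rangle\in\cC$. For $(1)\Rightarrow(2)$: if $\aA\in\cC$ then $\aA\in\cV$ gives well-definedness, and since $\cC$ is closed under $\Sf$ every subalgebra $\langle\oa\rangle$ lies in $\cC$, whence each $\theta_{\oa}\in\cB_k$ and $\xi$ is uniformly continuous. For $(2)\Rightarrow(1)$: well-definedness yields $\aA\in\cV$, and choosing a finite generating tuple $\oa=(a_1,\ldots,a_k)$ of $\aA$, uniform continuity gives $\theta_{\oa}\in\cB_k$; since $\langle\oa\rangle=\aA$ we conclude $\aA\cong\aF^\cV_k/\theta_{\oa}\in\cC$.

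I expect the main obstacle to be conceptual rather than computational: correctly recognizing the preimage of a basic pointwise-convergence entourage as a congruence whose quotient is a finitely generated subalgebra of $\aA$, and verifying that checking uniform continuity on the generating basis $\{U_{\oa}\}$ — rather than on all entourages — loses nothing. Once this bridge between the topological and the algebraic descriptions is in place, both implications become immediate applications of the established one-to-one correspondence together with the relevant closure property ($\Sf$ in one direction, finite generation of $\aA$ in the other).
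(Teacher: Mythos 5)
Your proposal is correct and takes essentially the same approach as the paper: both proofs rest on Proposition~\ref{prop:pointwise}, on identifying the preimage of a basic entourage $U_{\oa}$ under $\xi$ with the kernel of evaluation at $\oa$ (whose quotient is the subalgebra of $\aA$ generated by $\oa$), and on the correspondence of Corollary~\ref{cor}. The only cosmetic difference is that in the direction $(2)\Rightarrow(1)$ you invoke upward-closedness of the filter $\cB$ in the congruence lattice to conclude $\theta_{\oa}\in\cB_k$ directly, whereas the paper runs the equivalent argument on the algebra side, factoring the evaluation homomorphism through some $\aF^\cV_k/\theta$ with $\theta\in\cB_k$ and using closure of $\cC$ under $\HH$.
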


\begin{proof}
Suppose $\aA\in\cC$. Then in particular $\aA\in\cV$, and thus the natural clone homomorphism $\xi\colon\F^\cV \to\Clo(\aA)$ is a well defined function. Because $\aA\in\cC$, the uniformity $\cU^\cC$ on $\F^\cV$ contains, in particular, the uniformity induced by $\cU^\aA$ on $\F^\cV$ via $\xi$, so that $\xi$ is indeed $(\cU^\cC,\cU^\aA)$-uniformly continuous.

For the converse, suppose that the natural clone homomorphism from $\aF^\cV$ onto $\Clo(\aA)$ exists and is 
$(\cU^\cC,\cU^\aA)$-uniformly continuous, and let $\{a_1,\ldots,a_k\}$ be a generating set for $\aA$. Setting $\oa:=(a_1,\ldots,a_k)$, by Proposition~\ref{prop:pointwise} we have that the preimage of $U_{\oa}$ under $\xi$ is an element of $\cU^\cC$ and hence contains a congruence $\theta$ of $\aF^\cV_k$ such that $\aB:=\aF^\cV_k/\theta\in\cC$. It follows that for $k$-ary $\tau$-terms $f,g$, if $f^\aB=g^\aB$, then $f^\aA(\oa)=g^\aA(\oa)$. Hence, the natural homomorphism $h\colon \aF^\cV_k\To \aA$ which sends every representative $f$ of a class to $f^\aA(\oa)$ factors through the analogous homomorphism from $\aF_k^\cV$ onto $\aB$. Hence, there exists a homomorphism $g\colon \aB\to \aA$
so that
\begin{center}
\begin{tikzpicture}
\matrix (m) [matrix of math nodes, row sep=1.75em, column sep=1.75em, text 
height=1.5ex, text depth=0.25ex] 
{  \aF^\cV_k & \aB \\
&\aA\\};
\path[->] (m-1-1) edge node[above] {} (m-1-2);
\path[->] (m-1-1) edge node[above] {}  (m-2-2);
\path[->] (m-1-2) edge node[right] {$g$}  (m-2-2);
\end{tikzpicture}
\end{center} 
commutes. Also, notice that the homomorphism from $\aF_k^\cV$ to $\aA$ is surjective since 
$\{a_1,\ldots,a_k\}$ is a generating set for $\aA$. It follows that $g$ is surjective and thus 
$\aA$ is in $\cC$.
\end{proof}

\section{Discussion}\label{sect:discussion}

As a corollary, we obtain the theorem from~\cite{EilenbergSchuetzenberger, Reitermann, Banaschewski} characterizing pseudovarieties of finite algebras globally. To this end, we only have to observe that by Proposition~\ref{prop:pointwise}, the uniformity $\cU^\aB$ on $\Clo(\aB)$ is discrete for finite $\aB$. 

\begin{cor}\label{cor:esrb}
Let $\cC$ be a pseudovariety of finite $\tau$-algebras, and let $\cV$ be a variety containing $\cC$. 
Then  the members of $\cC$ are precisely those finite $\tau$-algebras $\aB$ for which the natural mapping 
from $\F^\cV$ onto $\Clo(\aB)$ is well-defined and uniformly continuous with respect to $\cU^\cC$ and the discrete topology, respectively.
\end{cor}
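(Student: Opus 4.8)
The plan is to deduce this statement directly from the Uniform Birkhoff theorem just proved, the only real work being to identify the uniformity $\cU^\aB$ with the discrete one when $\aB$ is finite. First I would check that a pseudovariety of finite $\tau$-algebras is in particular a pseudovariety of finitely generated algebras, so that the theorem applies at all. Every finite algebra is finitely generated and, more to the point, every subalgebra of a finite algebra is automatically finitely generated; hence on finite algebras the operator $\SPf$ coincides with ordinary closure under finite products and subalgebras. Since finite products, subalgebras, and homomorphic images of finite algebras are again finite, closing $\cC$ under $\HH$ and $\SPf$ never leaves the class of finite algebras and recovers $\cC$ itself. Thus $\cC$ is closed under $\HH$ and $\SPf$, i.e.\ it is a pseudovariety of finitely generated algebras, and all of its members are exactly its finite members.

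With this in hand, the Uniform Birkhoff theorem, applied to the finite and hence finitely generated algebra $\aB$, already yields that $\aB\in\cC$ if and only if the natural clone homomorphism from $\F^\cV$ onto $\Clo(\aB)$ is well-defined and $(\cU^\cC,\cU^\aB)$-uniformly continuous. It therefore only remains to see that for finite $\aB$ the uniformity $\cU^\aB$ of pointwise convergence coincides with the discrete uniformity, so that $(\cU^\cC,\cU^\aB)$-uniform continuity is precisely uniform continuity with respect to $\cU^\cC$ and the discrete topology, which is what the corollary asserts.

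This last point is where finiteness enters, and I expect it to be the only genuine step. By Proposition~\ref{prop:pointwise}, for each arity $k$ the uniformity $\cU^\aB_k$ is generated as a filter by the sets $U_{\oa}$ with $\oa\in B^k$. When $B$ is finite the index set $B^k$ is finite, so the finite intersection $\bigcap_{\oa\in B^k}U_{\oa}$ is again an entourage; but a pair $(f,g)$ of $k$-ary term functions lies in this intersection exactly when $f$ and $g$ agree on every tuple of $B^k$, that is, exactly when $f=g$. Hence the diagonal is an entourage of $\cU^\aB_k$, so $\cU^\aB_k$ is the discrete uniformity, and since this holds for every $k$ the whole of $\cU^\aB$ is discrete. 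Substituting this into the equivalence furnished by the theorem gives the statement, where one notes finally that ranging over all finite $\aB$ captures exactly the members of $\cC$, since these are all finite. The argument is routine once the reduction of $\SPf$ to ordinary products and subalgebras on finite algebras and the discreteness of $\cU^\aB$ have been observed; no further obstacle is anticipated.
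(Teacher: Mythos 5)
Your proof is correct and takes essentially the same route as the paper: apply the Uniform Birkhoff theorem to the finite (hence finitely generated) algebra $\aB$ and observe, via Proposition~\ref{prop:pointwise}, that $\cU^{\aB}$ is discrete when $B$ is finite, since the finite intersection $\bigcap_{\oa\in B^k}U_{\oa}$ is the diagonal on the $k$-ary functions. Your preliminary check that a pseudovariety of finite algebras is a pseudovariety of finitely generated algebras is a detail the paper leaves implicit, and your spelled-out discreteness argument is exactly the one the paper compresses into a single observation.
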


A further observation is that, in the case of a pseudovariety of finite $\tau$-algebras, the uniformity on $\aF^\cC$ is a \emph{Pervin uniformity} (generated by `blocks' of the form $(S\times S)\cup(S^c\times S^c)$) and thus its uniform completion is a Stone space \cite{GGP10}  and thus compact. This allows a reformulation of the above corollary in purely point-set topological terms, see e.g. \cite{Almeida}.\\

Another consequence is the following generalization of the theorem in~\cite{Topo-Birk} for finitely generated algebras, rather than oligomorphic ones. In fact, Lemma~10 of that article proves precisely that in oligomorphic setting, continuity implies uniform continuity. Therefore, the following can also be observed directly from their proof, a fact the authors of~\cite{Topo-Birk} were, however, unaware of at the time.

\begin{cor}\label{cor:topo-birk}
Let $\aA, \aB$ be finitely generated $\tau$-algebras. Then $\aB$ is contained in the pseudovariety generated by $\aA$ if and only if the natural mapping from $\Clo(\aA)$ onto $\Clo(\aB)$ is well-defined and uniformly continuous with respect to pointwise convergence.
\end{cor}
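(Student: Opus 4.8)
The plan is to derive this corollary directly from the main theorem (Uniform Birkhoff) by specializing the ambient pseudovariety $\cC$ to the one generated by a single algebra. First I would let $\cC$ be the pseudovariety of finitely generated algebras generated by $\aA$; that is, $\cC$ consists of all finitely generated members of the pseudovariety generated by $\aA$, so that $\cC = \HH\,\SPf(\{\aA\})$ in the notation fixed earlier. By definition $\aB$ lies in the pseudovariety generated by $\aA$ if and only if $\aB \in \cC$, since $\aB$ is assumed finitely generated. Let $\cV$ be the variety generated by $\aA$ (equivalently, by $\cC$), and apply the main theorem with this $\cC$ and this $\aA'=\aB$ as the test algebra. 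The theorem tells us that $\aB \in \cC$ if and only if the natural clone homomorphism from $\F^\cV$ onto $\Clo(\aB)$ is well-defined and $(\cU^\cC,\cU^\aB)$-uniformly continuous.

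The remaining task is purely to translate the uniformity $\cU^\cC$ on $\F^\cV$ into the pointwise-convergence uniformity on $\Clo(\aA)$, so that the condition of the theorem becomes the statement about the map $\Clo(\aA)\to\Clo(\aB)$ in the corollary. The key observation is that $\cC$ is exactly the pseudovariety generated by $\aA$, and hence the filter $\cB$ of $\con(\F^\cV)$ corresponding to $\cC$ via Corollary~\ref{cor} is precisely the one whose induced uniformity $\cU^\cC$ is, under the natural identification of $\F^\cV$ with $\Clo(\aA)$, the uniformity $\cU^\aA$. But by Proposition~\ref{prop:pointwise}, $\cU^\aA$ is nothing other than the uniformity of pointwise convergence on $\Clo(\aA)$. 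Thus the two clone homomorphisms $\F^\cV\to\Clo(\aB)$ and $\Clo(\aA)\to\Clo(\aB)$ coincide under the identification $\F^\cV\cong\Clo(\aA)$, and the source uniformity $\cU^\cC$ coincides with pointwise convergence on $\Clo(\aA)$.

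Putting these two steps together: $\aB$ is in the pseudovariety generated by $\aA$ iff $\aB\in\cC$ iff the natural clone homomorphism $\F^\cV\to\Clo(\aB)$ is well-defined and $(\cU^\cC,\cU^\aB)$-uniformly continuous iff the natural clone homomorphism $\Clo(\aA)\to\Clo(\aB)$ is well-defined and uniformly continuous with respect to pointwise convergence on both clones. The main point requiring care is the identification $\cU^\cC\cong\cU^\aA$ on $\F^\cV\cong\Clo(\aA)$; I expect this to be the crux of the argument, but it follows essentially by unwinding the definition of $\cU^\aA$ given just after Proposition~\ref{prop:pointwise}, where $\cU^\aA$ is \emph{defined} as the uniformity corresponding to the pseudovariety of finitely generated algebras generated by $\aA$. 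Hence this identification is immediate from the definitions, and the only genuine input is the main theorem itself together with Proposition~\ref{prop:pointwise}.
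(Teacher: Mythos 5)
Your proposal is correct and is exactly the derivation the paper intends: specialize the main theorem to $\cC$ the pseudovariety of finitely generated algebras generated by $\aA$, note that $\cU^\cC$ is by definition $\cU^{\aA}$ under the identification $\F^\cV\cong\Clo(\aA)$, and invoke Proposition~\ref{prop:pointwise} to read both uniformities as pointwise convergence. The paper leaves precisely this routine unwinding implicit, so your write-up matches its (unstated) proof.
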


\bibliographystyle{plain}
\bibliography{global.bib}

\end{document}